\author{Mark Shusterman}
\title{Groups with positive rank gradient \\ and their actions}
\newtheorem{theorem}{Theorem}[section]
\newtheorem{lemma}[theorem]{Lemma}
\newtheorem{proposition}[theorem]{Proposition}
\newtheorem{corollary}[theorem]{Corollary}
\numberwithin{equation}{section}
\newcommand{\lemref}[1]{\hyperref[#1]{Lemma \ref*{#1}}}
\newcommand{\thmref}[1]{\hyperref[#1]{Theorem \ref*{#1}}}
\newcommand{\propref}[1]{\hyperref[#1]{Proposition \ref*{#1}}}
\newcommand{\corref}[1]{\hyperref[#1]{Corollary \ref*{#1}}}
\def\moverlay{\mathpalette\mov@rlay}
\def\mov@rlay#1#2{\leavevmode\vtop{%
   \baselineskip\z@skip \lineskiplimit-\maxdimen
   \ialign{\hfil$\m@th#1##$\hfil\cr#2\crcr}}}
\newcommand{\charfusion}[3][\mathord]{
    #1{\ifx#1\mathop\vphantom{#2}\fi
        \mathpalette\mov@rlay{#2\cr#3}
      }
    \ifx#1\mathop\expandafter\displaylimits\fi}
\newcommand*{\defeq}{\mathrel{\rlap{%
                     \raisebox{0.27ex}{$\m@th\cdot$}}%
                     \raisebox{-0.27ex}{$\m@th\cdot$}}%
                     =}
\begin{document}

\maketitle

\abstract{ We show that given a finitely generated LERF group $G$ with positive rank gradient, and finitely generated subgroups $A,B \leq G$ of infinite index, one can find a finite index subgroup $B_0$ of $B$ such that $[G : \langle A \cup B_0 \rangle] = \infty$. This generalizes a theorem of Olshanskii on free groups. We conclude that a finite product of finitely generated subgroups of infinite index does not cover $G$. We construct a transitive virtually faithful action of $G$ such that the orbits of finitely generated subgroups of infinite index are finite. Some of the results extend to profinite groups with positive rank gradient.}

\section{Introduction}

The rank gradient of a finitely generated group $G$ is defined to be
\begin{equation} \label{DefRGEq}
\nabla G  \defeq \inf_{U} \frac{d(U) - 1}{[G : U]}
\end{equation}
where $U$ ranges over all subgroups of finite index in $G$, and $d(U)$ stands for the smallest cardinality of a generating set of $U$. The notion of rank gradient has been introduced in \cite{L1} and further studied, for instance in \cite{AGN}, \cite{AJN}, \cite{AN}, \cite{KN}, \cite{Os}, \cite{P}, and \cite{Sch}. It is our point of view that many interesting properties of a group (e.g. a free group) can be deduced using only the positivity of its rank gradient, as explained and demonstrated in \cite{Sh}. However, in order to effectively use the rank gradient, we inevitably need to make an additional assumption that will provide us with some finite index subgroups to which we can apply \eqref{DefRGEq}. This is achieved by considering the profinite topology of a group.

We think of all the groups as being topological by endowing them with the profinite topology, i.e. by taking as a basis the cosets of finite index subgroups. In this vein, recall that a group $G$ is LERF (locally extended residually finite), or subgroup separable, if its finitely generated subgroups are closed, or equivalently, if each finitely generated subgroup of $G$ is the intersection of some collection of finite index subgroups of $G$.

Our first result generalizes \cite[Theorem 1.1]{Ol} which is the statement one gets by taking $G$ to be a nonabelian free group in the following.

\begin{theorem} \label{MainThm}

Let $G$ be a finitely generated LERF group with positive rank gradient, and let $A,B$ be finitely generated subgroups of infinite index in $G$. Then there exists a finite index subgroup $H$ of $B$ such that $A$ and $H$ generate a subgroup of  infinite index in $G$.

\end{theorem}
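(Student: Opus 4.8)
The plan is to argue by contradiction, extracting from positive rank gradient only the single inequality it forces on finite-index subgroups. Write $c=\nabla G>0$, so that every finite-index subgroup $U\le G$ satisfies $d(U)-1\ge c\,[G:U]$. First I would dispose of the easy case: if $\langle A\cup B\rangle$ already has infinite index in $G$, then $H=B$ works. Otherwise $P:=\langle A\cup B\rangle$ has finite index, and since finite-index subgroups of LERF groups are again LERF and the rank gradient is multiplicative along finite index (so $\nabla P=[G:P]\,\nabla G>0$), I may replace $G$ by $P$ and assume henceforth that $\langle A\cup B\rangle=G$, while $A$ and $B$ still have infinite index.

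Next comes the elementary engine. Suppose toward a contradiction that $\langle A\cup H\rangle$ has finite index for \emph{every} finite-index $H\le B$. For such an $H$, Reidemeister--Schreier gives $d(H)\le 1+[B:H](d(B)-1)$, hence
\[
d(\langle A\cup H\rangle)\le d(A)+d(H)\le d(A)+1+[B:H]\,(d(B)-1).
\]
Combining this with the rank-gradient bound $d(\langle A\cup H\rangle)-1\ge c\,[G:\langle A\cup H\rangle]$ yields
\[
[G:\langle A\cup H\rangle]\le \frac{d(A)+[B:H]\,(d(B)-1)}{c}.
\]
Thus a contradiction will follow as soon as I produce a finite-index $H\le B$ for which $[G:\langle A\cup H\rangle]$ exceeds this bound, which is linear in $[B:H]$; and should such an $H$ instead render $\langle A\cup H\rangle$ of infinite index, I am already done.

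The way I would manufacture such an $H$ is to take $H=B\cap K$ for a finite-index subgroup $A\le K$; then $\langle A\cup H\rangle\le K$, so $[G:\langle A\cup H\rangle]\ge[G:K]$, while $[B:H]=[B:B\cap K]$ is exactly the size of the $B$-orbit of the basepoint in the coset action $G\curvearrowright G/K$. So it suffices to build finite-index subgroups $A\le K$ of index $R\to\infty$ in which the $B$-orbit of the basepoint is \emph{sublinear} in $R$, i.e. $[B:B\cap K]=o([G:K])$; equivalently, finite quotients $\pi\colon G\twoheadrightarrow Q$ with $Q=\langle\pi(A),\pi(B)\rangle$ and $|Q|\to\infty$ yet $|\pi(A)|\,|\pi(B)|=o(|Q|)$. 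For such $K$ the displayed inequality fails once $R$ is large, giving the contradiction.

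The main obstacle is precisely this construction. In Olshanskii's free-group setting it is immediate, because a free group maps onto any finite group with its free generators sent to arbitrary permutations, so one simply writes down an action in which $A$ fixes the basepoint while $B$ moves it within a tiny set. For a general LERF group one cannot prescribe the images of generators, and the heart of the proof is to use subgroup separability -- the separability of $A$ and of $B$ together with their infinite index -- to realize such Olshanskii-type finite actions: a transitive action on a large set in which $A$ lies in a point stabilizer while $B$ stays concentrated on a vanishingly small orbit. Once this separability construction is in place, the positivity of $c=\nabla G$ is exactly what converts the concentration of $B$ into the numerical contradiction above, completing the proof.
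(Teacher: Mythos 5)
Your outer reduction is correct and in fact coincides with the paper's endgame: the paper also takes $H=B\cap K$ for a finite-index $K=NA\supseteq A$ ($N$ a finite-index normal subgroup), bounds $d(\langle A\cup H\rangle)$ by Schreier, notes $\langle A\cup H\rangle\le K$ so its index is at least $[G:K]$, and derives the same numerical contradiction with $\nabla G$ once $[B:B\cap K]$ is small compared to $[G:K]$. The problem is that the step you yourself label ``the main obstacle'' --- producing a finite quotient $\pi\colon G\to Q$ with $|\pi(A)|\,|\pi(B)|$ (or, what the paper actually needs, $|\pi(AB)|$) a sufficiently small fraction of $|Q|$ --- is the entire content of the theorem, and you leave it unproved. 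Worse, the mechanism you propose for it is wrong: separability of $A$ and $B$ together with their infinite index is \emph{not} enough to concentrate $B$ on a small orbit in $G/K$. Separability only lets you make $[Q:\pi(A)]$ and $[Q:\pi(B)]$ individually large; it says nothing about the product set. For instance $\mathbb{Z}^2$ is LERF and $A=\mathbb{Z}\times 0$, $B=0\times\mathbb{Z}$ are finitely generated of infinite index, yet $AB=G$, so $\pi(A)\pi(B)=Q$ in every quotient. What rules this out is not LERF but the positivity of $\nabla G$, which must therefore be invoked a \emph{second} time, inside the construction of the quotient --- and your sketch uses it only once, in the final counting step.

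The paper supplies the missing construction as follows (Lemma 3.1 and Corollary 3.2). Using LERF, place $A$ and $B$ inside finite-index subgroups $U,V\le G$ with $[G:U],[G:V]\ge 2\max\{d(A),d(B)\}/\nabla G$, and set $A_0=A\cap V$, $B_0=B\cap U$, $C=\langle A_0\cup B_0\rangle\le U\cap V$. Schreier's bound gives
\begin{equation*}
d(C)\le [G:U\cap V]\left(\tfrac{d(A)}{[G:U]}+\tfrac{d(B)}{[G:V]}\right)\le [G:U\cap V]\,\nabla G,
\end{equation*}
which is incompatible with $C$ having finite index, since that index would be at least $[G:U\cap V]$. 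Hence $C$ is a finitely generated subgroup of infinite index, so (LERF again) $\mu(C)=0$ for the profinite density $\mu$; since $AB$ is covered by finitely many translates $\ell A_0B_0 r\subseteq \ell C r$, one gets $\mu(AB)=0$, and in particular a quotient $\pi$ with $|\pi(AB)|/|Q|\le \nabla G/(2\max\{d(A),d(B)\})$, which is exactly what your final inequality needs. Without an argument of this kind your proof does not close; with it, your write-up becomes essentially the paper's proof.
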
  

As in \cite[Theorem 1.1]{Ol}, if we are also given a finite subset $S \subseteq G \setminus A$, then the fact that $G$ is LERF gives us a finite index subgroup $U$ of $G$ which contains $A$ and avoids $S$. By taking $H_0 \defeq H \cap U$ we also assure that $\langle A \cup H_0 \rangle$ avoids $S$. A consequence of \thmref{MainThm} is:

\begin{theorem} \label{SecThm}

Let $G$ be a finitely generated LERF group with positive rank gradient, let $n \in \mathbb{N}$, and let $H_1, \dots, H_n$ be finitely generated subgroups of infinite index in $G$. Then 
\begin{equation}
H_1H_2 \cdots H_n \subsetneq G.
\end{equation}

\end{theorem}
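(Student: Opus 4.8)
The plan is to deduce this from \thmref{MainThm} by a short induction on $n$, after strengthening the statement so that it carries an inductive invariant. The point is that ``$H_1\cdots H_n$ is proper'' is not by itself inductive, since a product of proper subsets need not be proper. Instead I will prove the stronger claim that $H_1 \cdots H_n$ is contained in a finite union of right cosets $Kg_1 \cup \cdots \cup Kg_m$ of some finitely generated subgroup $K \leq G$ of infinite index. Once this is established, the theorem is immediate: a subgroup of infinite index has infinitely many right cosets, so no finite union of them can exhaust $G$, whence $H_1\cdots H_n \subsetneq G$.

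The engine of the induction is the case $n = 2$, which I extract as a lemma from \thmref{MainThm}: if $A, B \leq G$ are finitely generated of infinite index, then $AB$ lies in finitely many right cosets of a finitely generated subgroup of infinite index. To see this, apply \thmref{MainThm} to obtain a finite index subgroup $B' \leq B$ with $K \defeq \langle A \cup B'\rangle$ of infinite index; here $B'$ is finitely generated, being of finite index in the finitely generated group $B$, and hence so is $K$. Decomposing $B$ into finitely many right cosets $B = \bigsqcup_{j=1}^{p} B' t_j$ and using $AB' \subseteq K$, I obtain
\begin{equation}
AB \;=\; \bigcup_{j=1}^{p} A B' t_j \;\subseteq\; \bigcup_{j=1}^{p} K t_j,
\end{equation}
as desired.

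For the inductive step, suppose the strengthened claim holds for $n-1$ factors, so that $H_2 \cdots H_n \subseteq \bigcup_{i=1}^{m} L g_i$ for some finitely generated subgroup $L$ of infinite index and elements $g_i \in G$. Multiplying on the left by $H_1$ keeps the representatives $g_i$ on the outside,
\begin{equation}
H_1 H_2 \cdots H_n \;\subseteq\; \bigcup_{i=1}^{m} (H_1 L)\, g_i,
\end{equation}
and since $H_1$ and $L$ are finitely generated of infinite index, the lemma bounds $H_1 L \subseteq \bigcup_{j} K h_j$ for some finitely generated subgroup $K$ of infinite index. Substituting gives $H_1\cdots H_n \subseteq \bigcup_{i,j} K h_j g_i$, a finite union of right cosets of $K$, which completes the induction.

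The only genuine subtlety — and the reason for phrasing the invariant with right cosets, writing the ambient subgroup on the \emph{left} of the representatives — is that the coset representatives must not get interleaved with the factors as the product is expanded; keeping them on the right guarantees that left multiplication by $H_1$ produces the clean product $H_1 L$ to which the lemma applies. Everything else (finite generation of finite index subgroups, and the fact that finitely many cosets of an infinite index subgroup omit some element of $G$) is routine, so the real content of the theorem sits entirely inside \thmref{MainThm}.
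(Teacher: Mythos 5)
Your proof is correct, and it reaches the conclusion by a route that is structurally parallel to the paper's but logistically different. The paper deduces \thmref{SecThm} from the stronger statement $\mu(H_1\cdots H_n)=0$ (\corref{NegCor}), proved by induction using the profinite measure of Section 2: at each step \thmref{OlThm} merges the \emph{last two} factors $H_{n-1},H_n$ into a finitely generated infinite-index subgroup $C$ with $H_{n-1}H_n\subseteq CR$ for a finite set $R$, and subadditivity plus translation invariance of $\mu$, together with \propref{MesSubProp} (which needs the LPF hypothesis to see that $\mu(C)=0$), finish the induction. You instead peel factors off the \emph{left} and carry the purely combinatorial invariant ``contained in finitely many right cosets of a finitely generated infinite-index subgroup,'' which kills the product directly since finitely many cosets of an infinite-index subgroup cannot cover $G$. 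The two inductions are the same merging idea in different clothing --- indeed your invariant implies $\mu=0$ under LPF, since a finite union of translates of a measure-zero subgroup has measure zero --- but your version buys something real: it bypasses the profinite measure and \propref{MesSubProp} entirely, so no separability is used beyond what \thmref{MainThm} already requires, at the cost of delivering only the qualitative conclusion $H_1\cdots H_n\subsetneq G$ rather than the quantitative $\mu(H_1\cdots H_n)=0$ that the paper records as the ``strong form.'' (The only cosmetic omission is the trivial base case $n=1$, where $H_1$ is a single right coset of itself.)
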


This means that $G$ is not boundedly generated in a rather strong sense, thus improving upon \cite{Sh0} in the LERF case. Another application of \thmref{MainThm} is the construction of 'locally finite' actions. Similar actions of free and hyperbolic groups are constructed in \cite{Ol} and \cite{Ch}.

\begin{theorem} \label{ThirdThm}

Let $G$ be a finitely generated LERF group with $\nabla G >0$. Then there exists a (right) transitive action of $G$ on a set $X$ such that:

\begin{itemize}

\item There are only finitely many $g \in G$ which act trivially on $X$.

\item For every finitely generated subgroup $L$ of infinite index in $G$, and any $x \in X$, the orbit $xL$ is finite.

\end{itemize}

\end{theorem}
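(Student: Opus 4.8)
The plan is to realize $X$ as a coset space. A transitive right action of $G$ is the same datum as the action of $G$ on $X = K \backslash G$ for some subgroup $K \leq G$ (the stabilizer of a base point), the stabilizer of the coset $Kg$ being $K^{g}$, and the set of elements acting trivially being exactly the normal core $\operatorname{core}(K) = \bigcap_{g \in G} K^{g}$. Thus the two requirements translate to: (1) $\operatorname{core}(K)$ is finite, and (2) for every finitely generated $L$ of infinite index and every $g \in G$, the orbit size $[L : L \cap K^{g}]$ is finite. I would first record that, since conjugation permutes the finitely generated infinite-index subgroups and preserves index, the identity $[L : L \cap K^{g}] = [L^{g^{-1}} : L^{g^{-1}} \cap K]$ shows condition (2) to be equivalent to the cleaner statement that $[M : M \cap K] < \infty$ for every finitely generated subgroup $M$ of infinite index. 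In other words, $K$ must contain a finite-index piece of every such $M$ while still having infinite index and finite core.

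The heart of the construction is to build $K$ as an increasing union $K = \bigcup_{n} A_{n}$ of finitely generated subgroups of infinite index, starting from the trivial $A_{0}$ (which has infinite index since $\nabla G > 0$ forces $G$ infinite). As $G$ is LERF it is residually finite, hence countable, so I may enumerate its finitely generated infinite-index subgroups $M_{1}, M_{2}, \dots$ and its nontrivial elements $g_{1}, g_{2}, \dots$. At the step treating $M_{i}$, I apply \thmref{MainThm} with $A = A_{n}$ and $B = M_{i}$ to obtain a finite-index $H \leq M_{i}$ with $\langle A_{n} \cup H \rangle$ of infinite index, then shrink $H$ to $H_{0}$ via the LERF remark following \thmref{MainThm} so as to avoid a prescribed finite set, and set $A_{n+1} = \langle A_{n} \cup H_{0} \rangle$. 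This keeps $A_{n+1}$ finitely generated of infinite index while guaranteeing $[M_{i} : M_{i} \cap A_{n+1}] < \infty$, so condition (2) holds in the limit.

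Two bookkeeping tasks are interleaved into the same process, each using only finite avoidance sets so that the LERF remark always applies. To force $[G : K] = \infty$, I maintain an unboundedly growing finite set of representatives $t_{1}, \dots, t_{r}$ that are pairwise inequivalent modulo $A_{n}$ (possible because $A_{n}$ has infinite index) and reserve every ratio $t_{a} t_{b}^{-1}$, $a \neq b$, into a perpetual avoidance set; these ratios then stay out of $K$, keeping the $t$'s in distinct cosets and giving $[G : K] \geq r \to \infty$. To control the kernel, when treating $g_{k}$ I choose a conjugate $g_{k}^{x}$ lying outside the current $A_{n}$ and reserve it, which permanently keeps $g_{k}^{x} \notin K$ and hence removes $g_{k}$ from $\operatorname{core}(K)$. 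Dovetailing the three families of steps produces the desired $K$, and the associated action on $K \backslash G$ is automatically transitive.

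The main obstacle is precisely the kernel condition (1). Reserving a conjugate of $g_{k}$ succeeds only while some conjugate still lies outside $A_{n}$, and this fails exactly when the whole normal closure $\langle\langle g_{k} \rangle\rangle$ has already been absorbed into $A_{n}$ (by the correspondence between subgroups containing $\langle\langle g_{k} \rangle\rangle$ and subgroups of the quotient, this cannot happen unless the image of $A_{n}$ is an infinite-index subgroup of $G / \langle\langle g_{k} \rangle\rangle$). Reconciling the absorbing requirement (2), which is continually pushing elements into $K$, with the demand that only finitely many elements become permanently trapped is the delicate point. I expect to resolve it by a priority-style ordering of the steps, treating each $g_{k}$ early enough that absorption cannot yet have captured its entire conjugacy class, so that the set of trapped elements — and thus $\operatorname{core}(K)$ — stays finite.
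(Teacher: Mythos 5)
Your construction of the subgroup $K$ with the absorption property (every finitely generated infinite-index $M$ meets $K$ in a finite-index subgroup of $M$, while $[G:K]=\infty$) is essentially the paper's first step, which is carried out by following Olshanskii's argument with \thmref{MainThm} in place of his free-group statement; your reduction of the orbit condition to $[M : M \cap K] < \infty$ via conjugation is also correct, and your coset-representative bookkeeping is unnecessary ($K$ is an increasing union of finitely generated infinite-index subgroups of the finitely generated group $G$, so if $[G:K]$ were finite then $K$ would be finitely generated, hence equal to some $A_n$, a contradiction). The genuine gap is the finiteness of the kernel, exactly the point you flag as ``delicate'' and defer to an unspecified ``priority-style ordering.'' No ordering of the enumeration can work in general: whether $g$ ends up in the normal core of $K$ is governed by whether its entire normal closure is eventually absorbed, and your mechanism gives no bound on how many $g$ suffer this fate. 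For a free group none do (a finitely generated infinite-index subgroup of a free group contains no nontrivial normal subgroup of the whole group), but for a general LERF group with $\nabla G > 0$ the kernel can genuinely be nontrivial, and you offer no argument that the trapped set is finite rather than infinite.

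The paper resolves this by a completely different, a posteriori argument that requires no care during the construction: it proves that for \emph{any} infinite-index subgroup $R \leq G$ with the absorption property, the kernel $K$ of the action on $R\backslash G$ is automatically finite, using only residual finiteness and $\nabla G > 0$. In outline: if $|K| \geq d(G)/\nabla G$, residual finiteness yields a finite-index normal $U$ with $|KU/U| \geq d(G)/\nabla G$; a Schreier-type estimate combined with \eqref{DefRGEq} then shows $d(U/(U\cap K)) \leq d(V) - 1$ for every finite-index $V \leq U$. Lifting a minimal generating set of $U/(U\cap K)$ produces a finitely generated $L \leq U$ with $L(U\cap K) = U$, which must have infinite index in $G$ (else take $V = L$ for a contradiction); the absorption property gives a finite transversal $F$ of $L\cap R$ in $L$, whence $U = L(U\cap K) \subseteq F(U\cap R)$, so $[U : U \cap R] < \infty$ and $[G:R] < \infty$ --- absurd. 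This counting argument is the missing ingredient in your proposal, and it is where the hypotheses $\nabla G > 0$ and residual finiteness (rather than any combinatorial scheduling of the construction) do the work of bounding the kernel.
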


This gives us an almost faithful action on an infinite set which is 'locally finite' even though it is transitive. For instance, it follows that for all $g \in G$ and $x \in X$ the set $\{xg^n : n \in \mathbb{Z}\}$ is finite.  

Some examples of groups to which our theorems apply are:

\begin{enumerate}

\item Nonabelian free groups and nonabelian limit groups.

\item Surface groups and nonabelian Fuchsian groups.

\item Free products of finitely generated LERF groups of order $> 2$.

\item Free products of infinite finitely generated LERF groups amalgamating a finite subgroup.

\item Free products of finitely generated nonabelian Fuchsian groups with cyclic amalgamation.

\item Free products of infinite finitely generated nilpotent groups amalgamating a finite cyclic subgroup.

\item Fundamental groups of connected sums of compact hyperbolic $3$-manifolds.

\item Finitely presented LERF groups with deficiency $\geq 2$.

\item Graph groups whose graph is disconnected and does not contain neither an induced path of length $3$, nor an induced square.

\end{enumerate}

Our first two results hold under an assumption weaker than LERF, namely \textbf{LPF}, introduced in \cite[Definition 3.11]{C}. We say that a group $G$ is \textbf{LPF} if every finitely generated subgroup $H$ of infinite index in $G$ is contained in a subgroup $U$ of arbitrarily large finite index in $G$, or equivalently, if the closure of $H$ in $G$ is of infinite index. Furthermore, as explained in \ref{ProGr}, \thmref{MainThm} and \thmref{SecThm} have natural analogues for profinite groups, as all of their subgroups are closed by definition.

\section{Profinite measure}

Let $G$ be a group, and let $\mathcal{P}(G)$ be the family of its subsets. Define
\begin{equation} \label{DefProfMesEq}
\mu \colon \mathcal{P}(G) \to [0,1], \quad \mu(S) \defeq \inf_{\varphi} \frac{|\varphi(S)|}{|\varphi(G)|}
\end{equation}
where $\varphi$ ranges over all the epimorphisms from $G$ onto finite groups. We call $\mu$ the \textbf{profinite measure} on $G$, even though it is not a measure in case that $G$ is infinite. The profinite measure does however enjoy the following properties, the trivial proof of which is omitted.

\begin{enumerate}

\item Monotonicity: for $A \subseteq B \subseteq G$ we have 
\begin{equation} \label{MonotEq}
0 = \mu(\emptyset) \leq \mu(A) \leq \mu(B) \leq \mu(G) = 1.
\end{equation}

\item Subadditivity: for $A,B \subseteq G$ we have 
\begin{equation} \label{SubAddEq}
\mu(A \cup B) \leq \mu(A) + \mu(B).
\end{equation}

\item Translation invariance: for $g,h \in G, \ A \subseteq G$ we have 
\begin{equation} \label{TransInvEq}
\mu(gAh) = \mu(A).
\end{equation}

\end{enumerate}

Since the profinite measure 'considers' only finite images, we see that its value on a set coincides with its value on the closure of the set. Also, if $\widehat{G}$ is the profinite completion of $G$, then the profinite measure of a subset of $G$, is just the Haar measure of its closure in $\widehat{G}$. As we have already mentioned earlier, any closed subgroup $H \leq G$ is the intersection of a family of finite index subgroups of $G$. In light of that, if $[G : H] = \infty$ then $H$ is contained in a subgroup of $G$ with arbitrarily large finite index. We denote by $H_G$ the normal core of $H$ in $G$.

\begin{proposition} \label{MesSubProp}

For a closed subgroup $H$ of a group $G$ we have 
\begin{equation} \label{IndVsMesEq}
\mu(H) = \frac{1}{[G : H]}.
\end{equation}

\end{proposition}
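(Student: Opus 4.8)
The plan is to reduce the infimum defining $\mu(H)$ to a supremum of indices of finite-index subgroups containing $H$, and then to treat separately the cases of finite and infinite index.

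First I would fix an epimorphism $\varphi \colon G \twoheadrightarrow Q$ onto a finite group $Q$ and rewrite the associated ratio. Since $\varphi(H)$ is a subgroup of $Q = \varphi(G)$, we have
$$\frac{|\varphi(H)|}{|\varphi(G)|} = \frac{1}{[Q : \varphi(H)]} = \frac{1}{[G : \varphi^{-1}(\varphi(H))]} = \frac{1}{[G : H \ker\varphi]},$$
where $H\ker\varphi = \varphi^{-1}(\varphi(H))$ is a finite-index subgroup of $G$ containing $H$. Taking the infimum over all such $\varphi$ then gives
$$\mu(H) = \frac{1}{\sup_\varphi\, [G : H\ker\varphi]}.$$
Because $H \leq H\ker\varphi$, each index $[G : H\ker\varphi]$ is at most $[G : H]$, which already yields the inequality $\mu(H) \geq 1/[G:H]$.

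Next I would supply the matching bound when $[G : H] < \infty$. Here the normal core $H_G$ has finite index, so $\varphi \colon G \twoheadrightarrow G/H_G$ is an epimorphism onto a finite group with $\ker\varphi = H_G \leq H$. Consequently $H\ker\varphi = H$, the index $[G : H\ker\varphi] = [G:H]$ is attained in the supremum, and therefore $\mu(H) = 1/[G:H]$.

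Finally, for $[G:H] = \infty$ I would invoke the hypothesis that $H$ is closed, exactly as recorded in the paragraph preceding the statement: a closed subgroup of infinite index is contained in subgroups of arbitrarily large finite index. Given any $N$, choose a finite-index $U \geq H$ with $[G:U] \geq N$ and set $\varphi \colon G \twoheadrightarrow G/U_G$; then $\ker\varphi = U_G \leq U$, so $H\ker\varphi \leq U$ and $[G : H\ker\varphi] \geq [G:U] \geq N$. Hence the supremum above is infinite and $\mu(H) = 0 = 1/[G:H]$, completing the proof. I expect the only genuine input to be this last step, where closedness of $H$ is precisely what guarantees enough finite images to drive the index to infinity; everything else is the elementary correspondence between finite quotients of $G$ and finite-index subgroups containing $H$.
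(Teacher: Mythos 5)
Your proof is correct, and its overall skeleton matches the paper's: the upper bound in the finite-index case comes from the quotient by the normal core $H_G$, and the infinite-index case uses closedness to find finite-index overgroups $U \geq H$ of arbitrarily large index. The one place you genuinely diverge is the lower bound $\mu(H) \geq 1/[G:H]$. The paper obtains it by decomposing $G$ into the $n = [G:H]$ right cosets of $H$ and applying subadditivity and translation invariance of $\mu$, i.e. $1 = \mu(G) \leq n\,\mu(H)$; you instead observe the exact identity $|\varphi(H)|/|\varphi(G)| = 1/[G : H\ker\varphi]$ and note that $H \leq H\ker\varphi$ forces $[G : H\ker\varphi] \leq [G:H]$. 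Your route has the advantage of packaging the whole proposition into the single clean formula $\mu(H) = 1/\sup_\varphi [G : H\ker\varphi]$, making the role of closedness (supplying enough finite quotients to exhaust the proindex) completely transparent; the paper's route has the advantage of using only the three listed axiomatic properties of $\mu$, so the lower bound would survive even for subgroups that are not closed. Both arguments are complete as stated.
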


\begin{proof}

First, suppose that $n \defeq [G : H] < \infty$, and let $\{g_1, \dots , g_n\}$ be a right transversal of $H$ in $G$. The following inequalities give us \eqref{IndVsMesEq}: 
\begin{equation} \label{UBoundFin}
\mu(H) \stackrel{\ref{DefProfMesEq}}{\leq} \frac{|H/H_G|}{|G/H_G|} = \frac{[H : H_G]}{[G : H_G]} = \frac{[H : H_G]}{[G : H][H : H_G]}= \frac{1}{n}.
\end{equation}

\begin{equation}
1 \stackrel{\ref{MonotEq}}{=} \mu(G) = \mu(\bigcup_{i=1}^n Hg_i) \stackrel{\ref{SubAddEq}}{\leq} \sum_{i=1}^n \mu(Hg_i) \stackrel{\ref{TransInvEq}}{=} \sum_{i=1}^n \mu(H) = n\mu(H).
\end{equation}

Now, suppose that $[G : H] = \infty$, and take some $\epsilon > 0$. Since $H$ is closed, there exists some $H \leq U \leq G$ with $\frac{1}{\epsilon} \leq [G : U] < \infty$. Hence, 
\begin{equation}
\mu(H) \stackrel{\ref{MonotEq}}{\leq} \mu(U) = \frac{1}{[G : U]} \leq \epsilon.
\end{equation}

\end{proof}

\section{Finitely generated subgroups}

Following \cite{C}, we define the \textbf{proindex} of a subgroup $H$ in a group $G$ to be the supremum of indices of finite index subgroups of $G$ above $H$.
 
\begin{lemma} \label{WeakOlLem}

Let $G$ be a finitely generated group with $\nabla G > 0$, and let $A,B$ be finitely generated subgroups of infinite proindex in $G$. Then there exist finite index subgroups $A_0 \leq A, B_0 \leq B$ which generate a subgroup of infinite index in $G$.

\end{lemma}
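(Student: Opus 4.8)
The plan is to certify infinite index through positivity of the rank gradient. For any finitely generated $A_0 \leq A$ and $B_0 \leq B$ one has the crude bound $d(\langle A_0 \cup B_0 \rangle) \leq d(A_0) + d(B_0)$, so if $\langle A_0 \cup B_0 \rangle$ were of finite index then \eqref{DefRGEq} would force $[G : \langle A_0 \cup B_0 \rangle] \leq (d(A_0) + d(B_0) - 1)/\nabla G$; equivalently, by \propref{MesSubProp}, $\mu(\langle A_0 \cup B_0 \rangle) \geq \nabla G/(d(A_0) + d(B_0) - 1)$. Consequently it suffices to exhibit finite index subgroups $A_0 \leq A$ and $B_0 \leq B$ with
\[ \mu(\langle A_0 \cup B_0 \rangle) < \frac{\nabla G}{d(A_0) + d(B_0) - 1}, \]
for then $\langle A_0 \cup B_0 \rangle$ cannot be of finite index and we are done.

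To make the left-hand side small I would exploit the infinite proindex of $A$. Choose a finite index subgroup $W$ of $G$ with $A \leq W$ and $[G : W] = P$ as large as we like, and set $A_0 \defeq A$ and $B_0 \defeq B \cap W$, which is of finite index in $B$ since $[B : B \cap W] \leq P$. Then $\langle A_0 \cup B_0 \rangle \leq W$, so monotonicity \eqref{MonotEq} together with \propref{MesSubProp} gives $\mu(\langle A_0 \cup B_0 \rangle) \leq \mu(W) = 1/P$. Thus the displayed inequality would follow as soon as $P > (d(A) + d(B \cap W) - 1)/\nabla G$.

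The main obstacle is that $d(B \cap W)$ itself grows with $P$: passing to a subgroup of index $[B : B \cap W]$ can increase the rank roughly proportionally, so the gain $1/P$ in the measure and the loss in the threshold $\nabla G/(d(A) + d(B \cap W) - 1)$ compete at the same order, and the crude bound $d(\langle A \cup B_0 \rangle) \leq d(A) + d(B_0)$ is too wasteful to win outright. The heart of the argument must therefore be a sharper estimate showing that adjoining the fixed, bounded-complexity subgroup $A$ to the deep subgroup $B \cap W$ costs only boundedly many extra generators, so that $d(\langle A \cup (B \cap W) \rangle)$ stays well below $d(A) + d(B \cap W)$. To produce such an estimate I would work inside $W$, where the rank gradient is amplified, $\nabla W \geq [G : W]\,\nabla G = P\,\nabla G$ (which follows directly from \eqref{DefRGEq} applied to subgroups of $W$), and use that $B \cap W$ again has infinite proindex in $W$; this should let one feed the construction back into itself and trade the large rank gradient of $W$ against the rank growth of $B \cap W$. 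This refinement is the soft counterpart of the folding and Euler-characteristic bookkeeping underlying Olshanskii's free-group argument, and I expect controlling $d(\langle A \cup (B \cap W) \rangle)$ to be precisely where positivity of the rank gradient is genuinely used and where the real difficulty lies.
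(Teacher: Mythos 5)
You have the right framework---bound $d(\langle A_0\cup B_0\rangle)$ from above by subadditivity of rank, bound the hypothetical finite index from below by trapping the join inside a deep finite-index subgroup, and let \eqref{DefRGEq} produce the contradiction---and your diagnosis that the one-sided choice $A_0=A$, $B_0=B\cap W$ loses (because $d(B\cap W)$ grows at the same rate $P$ as the index threshold) is accurate. But the proposal then stops: you defer the ``heart of the argument'' to an unproven sharper bound on $d(\langle A\cup(B\cap W)\rangle)$, speculating about a recursive use of $\nabla W\geq[G:W]\nabla G$. That is a genuine gap, and it is also not where the actual proof goes: no estimate better than $d(\langle A_0\cup B_0\rangle)\leq d(A_0)+d(B_0)$ is needed.

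The missing idea is to use the infinite proindex of \emph{both} subgroups symmetrically and to cut each one by the \emph{other's} overgroup---precisely the freedom (a finite index subgroup $A_0$ of $A$, not $A$ itself) that the statement grants and that you never exploit. Choose finite index subgroups $U\supseteq A$ and $V\supseteq B$ with $[G:U],[G:V]\geq 2\max\{d(A),d(B)\}/\nabla G$, and set $A_0\defeq A\cap V$, $B_0\defeq B\cap U$, $C\defeq\langle A_0\cup B_0\rangle\leq U\cap V$. Schreier's bound gives $d(A_0)\leq[A:A_0]\,d(A)\leq[U:U\cap V]\,d(A)$ and likewise $d(B_0)\leq[V:U\cap V]\,d(B)$, so
\begin{equation*}
d(C)\leq d(A_0)+d(B_0)\leq [G:U\cap V]\left(\frac{d(A)}{[G:U]}+\frac{d(B)}{[G:V]}\right)\leq [G:U\cap V]\,\nabla G,
\end{equation*}
whereas if $[G:C]$ were finite then $C\leq U\cap V$ would force $d(C)\geq[G:C]\nabla G+1\geq[G:U\cap V]\nabla G+1$. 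The point is that the normalizing index $[G:U\cap V]$ absorbs a factor of $[G:U]$ against $d(A_0)$ and a factor of $[G:V]$ against $d(B_0)$ simultaneously, so the two summands contribute at most $d(A)/[G:U]$ and $d(B)/[G:V]$, each of which is made smaller than $\nabla G/2$ by the choice of $U$ and $V$. With a single overgroup $W$ and $A_0=A$ there is nothing to divide $d(B_0)$ by except $[G:W]$ itself, which is exactly the cancellation you observed; the two-overgroup intersection trick is what breaks that deadlock, and it renders the refined rank estimate you were searching for unnecessary.
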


\begin{proof}

Since the proindices are infinite, there exist finite index subgroups $U,V \leq G$ containing $A,B$ respectively, such that 
\begin{equation} \label{BigProIndEq}
[G : U],[G : V] \geq \frac{2\max\{d(A), d(B)\}}{\nabla G}.
\end{equation} 
Set $A_0 \defeq A \cap V, \ B_0 \defeq B \cap U, \ C \defeq \langle A_0 \cup B_0 \rangle$, and note that 
\begin{equation} \label{IndBoundEq}
\begin{split}
&[A : A_0] = [A : A \cap U \cap V] \leq [U : U \cap V] \\
&[B : B_0] = [B : B \cap U \cap V] \leq [V : U \cap V].
\end{split}
\end{equation}
Using Schreier's bound on the rank of a finite index subgroup, we get
\begin{equation} \label{BitLongEq}
\begin{split}
d(C) &\leq d(A_0) + d(B_0) \leq [A : A_0]d(A) + [B : B_0]d(B) \\
&\stackrel{\ref{IndBoundEq}}{\leq} [U : U \cap V]d(A) + [V : U \cap V]d(B) \\
&= [G : U \cap V](\frac{d(A)}{[G : U]} + \frac{d(B)}{[G : V]}) \\
&\leq \frac{2[G : U \cap V]\max\{d(A), d(B)\}}{\min\{[G : U],[G : V]\}} \\
&\stackrel{\ref{BigProIndEq}}{\leq} [G : U \cap V] \nabla G.
\end{split}
\end{equation}
Since $A_0,B_0 \leq U \cap V$, we see that $C \leq U \cap V$. Were the index $[G : C]$ finite, we would have the following contradiction to \eqref{BitLongEq}:
\begin{equation}
d(C) \stackrel{\ref{DefRGEq}}{\geq} [G : C]\nabla G + 1 \geq [G : U \cap V]\nabla G + 1.
\end{equation}

\end{proof}

As in \cite[Definition 3.11]{C}, we say that a group $G$ is \textbf{LPF} if the index and proindex coincide for every finitely generated subgroup of $G$.

\begin{corollary} \label{ProdCor}

Let $G$ be a finitely generated LPF group with $\nabla G > 0$, and let $A,B$ be finitely generated subgroups of infinite index in $G$. Then 
\begin{equation}
\mu(AB) = 0.
\end{equation}

\end{corollary}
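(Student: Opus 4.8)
The plan is to derive this directly from \lemref{WeakOlLem} together with the elementary properties of the profinite measure. First, since $G$ is \textbf{LPF}, the index and proindex agree for the finitely generated subgroups $A$ and $B$; as both have infinite index, they both have infinite proindex, so \lemref{WeakOlLem} applies and furnishes finite index subgroups $A_0 \leq A$ and $B_0 \leq B$ for which $C \defeq \langle A_0 \cup B_0 \rangle$ has infinite index in $G$.

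The crux is to cover $AB$ by finitely many two-sided translates of $C$. I would fix a left transversal $a_1, \dots, a_m$ of $A_0$ in $A$ and a right transversal $b_1, \dots, b_n$ of $B_0$ in $B$; these are finite because $[A : A_0], [B : B_0] < \infty$. Writing an element of $A$ as $a_i x$ with $x \in A_0$ and an element of $B$ as $y b_j$ with $y \in B_0$, their product is $a_i(xy)b_j$ with $xy \in A_0 B_0 \subseteq C$. This yields the inclusion
\begin{equation}
AB \subseteq \bigcup_{i=1}^{m} \bigcup_{j=1}^{n} a_i C b_j,
\end{equation}
whence, by subadditivity \eqref{SubAddEq} and translation invariance \eqref{TransInvEq},
\begin{equation}
\mu(AB) \leq \sum_{i,j} \mu(a_i C b_j) = mn\,\mu(C).
\end{equation}

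It then remains to check that $\mu(C) = 0$. As finite index subgroups of the finitely generated groups $A$ and $B$, the groups $A_0$ and $B_0$ are finitely generated by Schreier, and hence so is $C$. Since $G$ is \textbf{LPF} and $[G : C] = \infty$, the proindex of $C$ is infinite, so $C$ is contained in finite index subgroups $U \leq G$ of arbitrarily large index; monotonicity \eqref{MonotEq} together with \propref{MesSubProp} give $\mu(C) \leq \mu(U) = 1/[G : U]$, which can be made arbitrarily small. Combined with the previous display, this gives $\mu(AB) = 0$.

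The one point that requires care — and the only place where the argument could go wrong if done carelessly — is the choice of cosets: one must take \emph{left} cosets on the $A$-side and \emph{right} cosets on the $B$-side, so that the inner factor telescopes to $A_0 B_0$, which lands in $C$. With the opposite choice the inner factor would be a product $a_i b_j$ that need not lie in $C$, and the covering would break down. Everything else is routine manipulation with the basic properties of $\mu$ established earlier.
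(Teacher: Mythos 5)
Your proposal is correct and follows essentially the same route as the paper: apply \lemref{WeakOlLem} (via the LPF hypothesis to convert infinite index into infinite proindex), cover $AB$ by the finitely many translates $\ell A_0 B_0 r$ using a left transversal on the $A$-side and a right transversal on the $B$-side, and conclude with subadditivity, translation invariance, and $\mu(C)=0$ from \propref{MesSubProp}. Your remark about the orientation of the transversals is exactly the point the paper's decomposition is built around.
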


\begin{proof}

By \lemref{WeakOlLem}, there exist finite index subgroups $A_0,B_0$ of $A,B$ respectively, such that the index of $C \defeq \langle A_0 \cup B_0 \rangle$ in $G$ is infinite. Since $A_0$ and $B_0$ are finitely generated, $C$ is finitely generated as well, so its proindex in $G$ is infinite as $G$ is LPF. Therefore, by \propref{MesSubProp},
\begin{equation} \label{VanishMesEq}
\mu(C) = 0.
\end{equation}
Taking $L$ to be a left transversal of $A_0$ in $A$, and $R$ to be a right transversal of $B_0$ in $B$, we get 

\begin{equation}
\begin{split}
\mu(AB) &= \mu(\bigcup_{L,R} \ell A_0B_0 r) \stackrel{\ref{SubAddEq}}{\leq} \sum_{L,R} \mu(\ell A_0B_0 r)
\stackrel{\ref{TransInvEq}}{=} \sum_{L,R} \mu(A_0B_0) \\
&= |L||R|\mu(A_0B_0) \stackrel{\ref{MonotEq}}{\leq} |L||R|\mu(C) \stackrel{\ref{VanishMesEq}}{=} 0.
\end{split}
\end{equation}

\end{proof}

We need a simple observation for the proof of \thmref{MainThm}.

\begin{proposition} \label{ObsProp}

Let $\varphi \colon G \to K$ be a group homomorphism, let $N$ be its kernel, and let $A,B \subseteq G$. Then
$\varphi(A) \cap \varphi(B) \subseteq \varphi(B \cap NA).$

\end{proposition}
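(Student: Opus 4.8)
The plan is to establish the inclusion pointwise: I would take an arbitrary $k \in \varphi(A) \cap \varphi(B)$ and exhibit a single element of $B \cap NA$ whose image under $\varphi$ equals $k$. Since $k \in \varphi(A)$, there is some $a \in A$ with $\varphi(a) = k$, and since $k \in \varphi(B)$, there is some $b \in B$ with $\varphi(b) = k$. These two witnesses are the only data the argument needs.

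The key observation is that $a$ and $b$ have the same image, hence differ by an element of the kernel. Concretely, $\varphi(ba^{-1}) = \varphi(b)\varphi(a)^{-1} = k k^{-1} = 1$, so $ba^{-1} \in N$. Writing $b = (ba^{-1})a$ then exhibits $b$ as a product of an element of $N$ with an element of $A$, whence $b \in NA$. Since $b \in B$ by construction, the very same element $b$ lies in $B \cap NA$, and $\varphi(b) = k$. This shows $k \in \varphi(B \cap NA)$, and as $k$ was arbitrary the desired inclusion follows.

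There is essentially no obstacle here; the statement is a formal consequence of the definition of the kernel, and no hypotheses on $A$ or $B$ (such as being subgroups) are used. The only point worth a moment's care is the side on which the products are formed: the computation produces $b = (ba^{-1})a$ as an element of $NA$ rather than $AN$, which is exactly what the statement asserts. Since $N$ is normal the distinction would be immaterial in any case, but it is worth noting that normality is never invoked — the inclusion holds for the kernel of an arbitrary homomorphism precisely because equal images force a kernel difference on the correct side.
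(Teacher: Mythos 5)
Your argument is correct and coincides with the paper's own proof: both take $a \in A$, $b \in B$ with the same image, observe $ba^{-1} \in N$ so that $b \in Na \subseteq NA$, and conclude $b \in B \cap NA$ witnesses the membership. Your additional remark that normality of $N$ is never used is accurate and consistent with the paper's treatment.
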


\begin{proof}

Let $z \in \varphi(A) \cap \varphi(B)$. There exist $a \in A, \ b \in B$ such that $z = \varphi(a) = \varphi(b)$. Thus, $b \in Na \subseteq NA$, so $z = \varphi(b) \in \varphi(B \cap NA)$.

\end{proof}

Let us now prove \thmref{MainThm}.

\begin{theorem} \label{OlThm}

Let $G$ be a finitely generated LPF group with $\nabla G > 0$, and let $A,B$ be finitely generated subgroups of infinite index in $G$. Then there is a finite index subgroup $B_0 \leq B$ such that $[G : \langle A \cup B_0 \rangle] = \infty$.

\end{theorem}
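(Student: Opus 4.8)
The plan is to imitate the proof of \lemref{WeakOlLem}, but to arrange the bookkeeping so that all of the shrinking happens inside $B$. Concretely, I would look for a finite-index normal subgroup $N \trianglelefteq G$, write $\varphi\colon G\to K \defeq G/N$ for the quotient, and set $B_0 \defeq B\cap NA$. Since $N$ is normal, $NA=\varphi^{-1}(\varphi(A))$ is a subgroup containing both $A$ and $B_0$, so $C \defeq \langle A\cup B_0\rangle\le NA$ and hence $[G:C]\ge[G:NA]=[K:\varphi(A)]$. The point of this choice is that $C$ is trapped in a subgroup of large index while $B_0$ stays cheap to generate; I would then play the rank of $C$ against $\nabla G$ exactly as in \lemref{WeakOlLem}.

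First I would control $B_0$, and here \propref{ObsProp} is exactly what is needed: it gives $\varphi(A)\cap\varphi(B)\subseteq\varphi(B\cap NA)=\varphi(B_0)$, and the reverse inclusion is immediate, so $\varphi(B_0)=\varphi(A)\cap\varphi(B)$. In particular $[B:B_0]=[\varphi(B):\varphi(A)\cap\varphi(B)]$ is finite, so $B_0$ is a legitimate finite-index subgroup of $B$, and Schreier's bound yields $d(C)\le d(A)+d(B_0)\le d(A)+[B:B_0]\,d(B)$.

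The engine of the argument is \corref{ProdCor}: since $\mu(AB)=0$, I may choose $\varphi$ so that $|\varphi(A)\varphi(B)|/|K|$ is as small as I please. The key observation is that this single ratio governs everything, because $[B:B_0]/[G:NA]=\dfrac{[\varphi(B):\varphi(A)\cap\varphi(B)]}{[K:\varphi(A)]}=\dfrac{|\varphi(A)\varphi(B)|}{|K|}$. Thus driving the ratio to $0$ simultaneously forces $[G:NA]=[K:\varphi(A)]$ to be large and keeps $[B:B_0]\,d(B)$ a vanishing fraction of $[G:NA]\,\nabla G$. I would then choose $\varphi$ so that $[B:B_0]\,d(B)$ and $d(A)$ are each below $\tfrac12[G:NA]\,\nabla G$, which is possible since the index $[G:NA]$ tends to infinity as the ratio tends to $0$; this gives $d(C)<[G:NA]\,\nabla G$. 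On the other hand, were $[G:C]$ finite, then \eqref{DefRGEq} would force $d(C)\ge[G:C]\,\nabla G+1\ge[G:NA]\,\nabla G+1$, a contradiction, so $[G:C]=\infty$ as desired.

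The hard part will be the middle identity: recognizing that $\mu(AB)=0$ is not merely a qualitative analogue of the conclusion but the precise quantitative input that both inflates $[G:NA]$ and deflates the relative size of $B_0$ in one stroke. Once that is in hand, the remainder is the same rank-versus-index contradiction that drives \lemref{WeakOlLem}, and notably the whole estimate goes through $\nabla G$ directly, with the LPF hypothesis entering only through \corref{ProdCor}.
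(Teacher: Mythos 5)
Your proposal is correct and follows essentially the same route as the paper's proof: the same choice $B_0 = B \cap NA$ with $N$ the kernel of a finite quotient witnessing $\mu(AB) \leq \epsilon$ via \corref{ProdCor}, the same use of \propref{ObsProp} to bound $[B:B_0]$ by $\epsilon[G:NA]$, and the same Schreier-plus-rank-gradient contradiction using $C \leq NA$. The only difference is cosmetic bookkeeping (you split the bound on $d(C)$ into two halves of $[G:NA]\nabla G$, while the paper absorbs $d(A)$ into $2r[B:B_0]$), which leads to the same threshold $\epsilon = \nabla G/(2\max\{d(A),d(B)\})$.
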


\begin{proof}

Set 
\begin{equation} \label{ProofDef}
r \defeq \max\{d(A), d(B)\}, \quad \epsilon \defeq \frac{\nabla G}{2r}.
\end{equation}
By \corref{ProdCor}, $\mu(AB) = 0$, so there exists an epimorphism onto a finite group $\varphi \colon G \to K$, such that 
\begin{equation} \label{MesZerABEq}
\frac{|\varphi(AB)|}{|K|} \stackrel{\ref{DefProfMesEq}}{\leq} \epsilon.
\end{equation}
Put 
\begin{equation} \label{ExtraDefEq}
N \defeq \mathrm{Ker}(\varphi), \quad  B_0 \defeq B \cap NA, \quad C \defeq \langle A \cup B_0 \rangle
\end{equation}
and note that $B \cap N = B_0 \cap N$, so
\begin{equation} \label{CompareIndEq}
\begin{split}
[B : B_0] &= \frac{[B : B \cap N]}{[B_0 : B \cap N]} = \frac{[B : B \cap N]}{[B_0 : B_0 \cap N]} \stackrel{\ref{ExtraDefEq}}{=} \frac{|\varphi(B)|}{|\varphi(B_0)|} \\ &\stackrel{\ref{ObsProp}}{\leq} \frac{|\varphi(B)|}{|\varphi(A) \cap \varphi(B)|}
= \frac{|\varphi(A)\varphi(B)|}{|\varphi(A)|} = \frac{|\varphi(AB)|}{|\varphi(A)|} \\ &\stackrel{\ref{MesZerABEq}}{\leq}
\frac{\epsilon|K|}{|\varphi(A)|} = \epsilon[K : \varphi(A)] \stackrel{\ref{ExtraDefEq}}{=} \epsilon[G : NA].
\end{split}
\end{equation}
Applying Schreier's bound we see that 
\begin{equation} \label{RankUpBoundddEq}
\begin{split}
d(C) &\stackrel{\ref{ExtraDefEq}}{\leq} d(A) + d(B_0) \leq d(A) + [B : B_0]d(B) \\
&\stackrel{\ref{ProofDef}}{\leq} r(1+[B:B_0]) \leq 2r[B:B_0] \stackrel{\ref{CompareIndEq}}{\leq} 2r\epsilon [G : NA] \\
&\stackrel{\ref{ProofDef}}{=} [G : NA]\nabla G.
\end{split}
\end{equation}
If $[G : C]$ were finite, we would get a contradiction to \eqref{RankUpBoundddEq}: 
\begin{equation}
d(C) \stackrel{\ref{DefRGEq}}{\geq} [G : C]\nabla G + 1 \stackrel{\ref{ExtraDefEq}}{\geq} [G : NA]\nabla G + 1. 
\end{equation}

\end{proof}

\section{Corollaries}

We give some corollaries of \thmref{OlThm}, the first of which is a strong form of \thmref{SecThm}.

\begin{corollary} \label{NegCor}

Let $G$ be a finitely generated LPF group with $\nabla G > 0$, let $n \in \mathbb{N}$, and let $H_1, \dots, H_n$ be finitely generated subgroups of infinite index in $G$. Then 
\begin{equation}
\mu(H_1 \cdots H_n) = 0.
\end{equation}

\end{corollary}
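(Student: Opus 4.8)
The plan is to induct on $n$. The base case $n=1$ is immediate: since $G$ is LPF, a finitely generated subgroup of infinite index has infinite proindex, so combining \propref{MesSubProp} with monotonicity \eqref{MonotEq} gives $\mu(H_1) \leq \mu(U) = 1/[G:U]$ for subgroups $U \geq H_1$ of arbitrarily large finite index, whence $\mu(H_1) = 0$. (One could equally well take $n=2$, which is exactly \corref{ProdCor}, as the base.)

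For the inductive step, assume the statement for $n-1$ factors and consider $H_1 \cdots H_n$ with $n \geq 2$. The obstruction is that $H_1 \cdots H_n$ is not a subgroup, so the measure estimates for subgroups cannot be applied to it directly; the idea is to absorb the first two factors into a single subgroup of infinite index, reducing the number of factors by one. I would apply \thmref{OlThm} to $A = H_1$ and $B = H_2$ to obtain a finite index subgroup $B_0 \leq H_2$ such that $C \defeq \langle H_1 \cup B_0 \rangle$ has infinite index in $G$; here $C$ is finitely generated since $H_1$ and $B_0$ are (the latter being a finite index subgroup of the finitely generated group $H_2$). Choosing a finite right transversal $R$ of $B_0$ in $H_2$ gives $H_2 = \bigcup_{r \in R} B_0 r$, hence $H_1 H_2 = \bigcup_{r \in R} H_1 B_0 r \subseteq \bigcup_{r \in R} C r$. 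Multiplying on the right by $H_3 \cdots H_n$ yields $H_1 \cdots H_n \subseteq \bigcup_{r \in R} C r H_3 \cdots H_n$, so by subadditivity \eqref{SubAddEq} it suffices to show $\mu(C r H_3 \cdots H_n) = 0$ for each $r \in R$.

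Finally, I would rewrite each such set via conjugation. Since $r H_3 \cdots H_n = (r H_3 r^{-1})(r H_4 r^{-1}) \cdots (r H_n r^{-1}) r$, translation invariance \eqref{TransInvEq} gives $\mu(C r H_3 \cdots H_n) = \mu\bigl(C (r H_3 r^{-1}) \cdots (r H_n r^{-1})\bigr)$. This is the profinite measure of a product of the $n-1$ subgroups $C, r H_3 r^{-1}, \dots, r H_n r^{-1}$, each finitely generated and of infinite index (conjugation is an index-preserving automorphism, and $C$ has infinite index by construction). The induction hypothesis then gives $\mu(C r H_3 \cdots H_n) = 0$, and summing over the finitely many $r \in R$ completes the argument. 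The only real work is the merging step, which is precisely where the positive rank gradient and the LPF hypothesis enter, through \thmref{OlThm}; everything else is formal manipulation of $\mu$.
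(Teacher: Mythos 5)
Your proof is correct and follows essentially the same approach as the paper: induction on $n$, using \thmref{OlThm} to merge two adjacent factors into a single finitely generated subgroup of infinite index, then a finite transversal together with subadditivity and translation invariance. The only difference is that the paper merges the \emph{last} two factors $H_{n-1}, H_n$, which places the transversal $R$ at the far right and so avoids the conjugation step you need to move $r$ past $H_3 \cdots H_n$; your conjugation identity is valid, so both variants work.
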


\begin{proof}

We induct on $n$, using \propref{MesSubProp} to see that the base case $n=1$ holds. For $n \geq 2$, \thmref{OlThm} gives us a finitely generated subgroup $C$ of infinite index in $G$ which contains both $H_{n-1}$ and a finite index subgroup $H$ of $H_n$. By induction,
\begin{equation} \label{IndVanEq}
\mu(H_1 \cdots H_{n-2}C) = 0
\end{equation}
so by taking $R$ to be a right transversal of $H$ in $H_n$, we see that
\begin{equation}
\begin{split}
\mu(H_1 \cdots H_{n-2} H_{n-1} H_n) &= \mu(H_1 \cdots H_{n-2}H_{n-1}HR) \\
&\stackrel{\ref{MonotEq}}{\leq} \mu(H_1 \cdots H_{n-2}\langle H_{n-1} \cup H \rangle R) \\
&\stackrel{\ref{MonotEq}}{\leq} \mu(H_1 \cdots H_{n-2}CR) \\
&= \mu(\bigcup_{r \in R} H_1 \cdots H_{n-2} C r) \\
&\stackrel{\ref{SubAddEq}}{\leq} \sum_{r \in R} \mu(H_1 \cdots H_{n-2} C r) \\
&\stackrel{\ref{TransInvEq}}{=} |R|\mu(H_1 \cdots H_{n-2}C) \stackrel{\ref{IndVanEq}}{=} 0. 
\end{split}
\end{equation}

\end{proof}

In order to prove \thmref{ThirdThm}, we follow the argument in the proof of \cite[Corollary 4.1 (a)]{Ol}, and instead of using \cite[Theorem 1.1]{Ol}, we invoke \thmref{OlThm}. The conclusion is that any LPF group $G$ with $\nabla G > 0$ contains an infinite index subgroup $R$ such that every finitely generated subgroup $L$ of infinite index in $G$ contains $L \cap R$ as a finite index subgroup. In other words, $G$ acts transitively on $X \defeq R \backslash G$ such that for every $x \in X$ and every finitely generated subgroup $L$ of infinite index in $G$, the orbit $xL$ is finite (see the proof of \cite[Corollary 4.5 (a)]{Ol} for a detailed explanation). Hence, in order to establish \thmref{ThirdThm}, we only need to show that the kernel of the action (those $g$ in $G$ which act trivially) is finite. For that, recall that $G$ is said to be \textbf{RF} (residually finite) if for every $K \subseteq G$ with $|K| \geq M \in \mathbb{R}$, there exists a finite index subgroup $U \lhd G$ such that $|KU/U| \geq M$. If $G$ is LERF, it is also RF.

\begin{theorem}

Let $G$ be a finitely generated residually finite group with $\nabla G > 0$, and let $R$ be an infinite index subgroup of $G$ such that for every finitely generated subgroup $L$ of infinite index in $G$ we have $[L : L \cap R] < \infty$. Then $K \defeq \mathrm{Ker}(R \backslash G \curvearrowleft G)$ is finite.

\end{theorem}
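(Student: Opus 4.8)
The plan is to argue by contradiction: assume $K$ is infinite and use the positivity of the rank gradient to derive $\nabla G = 0$. First I would record the identification $K = \mathrm{Ker}(R \backslash G \curvearrowleft G) = \bigcap_{g \in G} R^{g} = R_{G}$, the normal core of $R$; in particular $K \lhd G$, and since $K \leq R$ the index $[G:K]$ is infinite. The residual finiteness hypothesis is tailored so that it suffices to control $K$ through its finite images: for every finite-index normal $U \lhd G$ the image $KU/U$ is a normal subgroup of $G/U$, it lies inside $RU/U$, and it acts trivially on the finite coset space $(RU)\backslash G$. Thus showing $K$ finite is equivalent to bounding $|KU/U|$ uniformly in $U$, i.e. to showing that the closure $\overline{K}$ is finite; if instead $K$ is infinite, the quantitative form of RF lets me, for every $M$, pick $U$ with $|KU/U| \geq M$.

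The engine of the contradiction is the fact---available once $\nabla G > 0$---that a finitely generated group of positive rank gradient has no finitely generated infinite normal subgroup of infinite index; together with the multiplicativity $\nabla H = [G:H]\,\nabla G$ for finite-index $H$, the same holds inside any finite-index subgroup of $G$. So the substance of the proof is to manufacture, out of an infinite $K$, such a forbidden subgroup. Concretely I would extract from $K$ a finitely generated subgroup $N \leq K$ of unbounded size (using the large images $KU/U$ to force $|N| \geq M$, hence $N$ infinite), note that $N$ has infinite index in $G$ because $N \leq R$, and then pass to the normal closure of $N$ inside a suitable finite-index subgroup $V$. The role of the local finiteness of the action is to keep this normal closure finitely generated: since conjugation by the finitely generated infinite-index subgroups generating $V$ moves any fixed coset through only finitely many places, one expects only finitely many conjugates of $N$ to be needed, producing a finitely generated, infinite, infinite-index normal subgroup of $V$ and thereby contradicting $\nabla V = [G:V]\,\nabla G > 0$.

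The main obstacle is precisely this manufacturing step, and it is subtle for a structural reason: every element of $K$ acts trivially on $X$, so the orbit-finiteness hypothesis is blind to $K$ and cannot be violated by enlarging orbits with elements of $K$. Hence the contradiction cannot come from the orbit hypothesis alone---indeed $R = [F,F]$ in a free group $F$ has infinite core and fails only the orbit hypothesis---and must instead be set against $\nabla G > 0$. The delicate balance is to use residual finiteness to make $K$ \emph{visible and large} in finite quotients while using the local finiteness of the action to certify that the normal subgroup built from $K$ is \emph{finitely generated after normalization}; reconciling these two opposing demands is the crux of the argument, and I expect the careful choice of $V$ and of the finitely many conjugates to be where the real work lies.

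Finally, I would check the degenerate cases: that $G$ genuinely contains finitely generated subgroups of infinite index (assured since $\nabla G > 0$ forces non-amenability, and in the settings at hand such subgroups are plentiful), and that passing to $V$ preserves infinite index of the constructed subgroup (it does, as $N \leq R$ keeps everything of infinite index). If the direct construction of a finitely generated normal subgroup proves too rigid, the fallback is to run the same contradiction profinitely: replace $K$ by the closed normal subgroup $\overline{K} \leq \widehat{G}$ and use the profinite analogue of the rank-gradient obstruction together with the Haar-measure description of $\mu$ from \propref{MesSubProp} to force $\overline{K}$, and hence $K$, to be finite.
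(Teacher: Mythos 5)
There is a genuine gap, in fact several, concentrated exactly where you predict ``the real work lies.'' First, your extraction step does not produce what you need: from $|KU/U|\geq M$ for various $U$ you only get finitely generated subgroups $N\leq K$ of unbounded \emph{finite} size, not a single infinite finitely generated $N$; and if $K$ happens to be locally finite, no infinite finitely generated $N\leq K$ exists at all. Second, as you yourself observe, every conjugate of $N$ lies in $K$ and acts trivially on $R\backslash G$, so the orbit-finiteness hypothesis gives you no handle whatsoever on the normal closure of $N$ in $V$; the heuristic that ``only finitely many conjugates are needed'' has no mechanism behind it. Third, your engine --- that a finitely generated group with $\nabla G>0$ admits no finitely generated infinite normal subgroup of infinite index --- is a substantial claim not proved in the paper and not obviously available: the naive Schreier estimate for $U\supseteq N$ gives $\frac{d(U)-1}{[G:U]}\lesssim d(G/N)-1$, which vanishes only when $G/N$ is cyclic. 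So the proposal reduces the theorem to an unproved lemma and then fails to construct the object that lemma would need.

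The paper's proof runs along a different axis and uses the orbit hypothesis in an essential, non-vacuous way. Assuming only $|K|\geq d(G)/\nabla G$ (no infinitude needed), residual finiteness gives $U\lhd G$ of finite index with $|KU/U|\geq d(G)/\nabla G$. A Schreier computation then shows $d(U/U\cap K)\leq d(V)-1$ for \emph{every} finite index subgroup $V\leq U$. Lifting a minimal generating set of $U/(U\cap K)$ to a finitely generated $L\leq U$ with $L(U\cap K)=U$, one sees $[U:L]$ must be infinite (else take $V=L$ to get $d(L)\leq d(L)-1$). Now $L$ is a finitely generated subgroup of infinite index in $G$ which is \emph{not} inside $K$, so the hypothesis $[L:L\cap R]<\infty$ genuinely bites: combined with $U\cap K\leq U\cap R$ it yields $U=L(U\cap K)\subseteq F(U\cap R)$ for a finite transversal $F$, forcing $[G:R]<\infty$, a contradiction. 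The point you missed is that the contradiction is not ``$K$ produces a forbidden normal subgroup'' but ``a large $K$ lets one generate a finite-index subgroup using $K$ together with a finitely generated subgroup of infinite index, and the orbit hypothesis then collapses the index of $R$.''
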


\begin{proof}

Towards a contradiction, suppose that $|K| \geq \frac{d(G)}{\nabla G}$. Since $G$ is residually finite, there exists a finite index subgroup $U \lhd G$ such that
\begin{equation} \label{KSizeLBoundEq}
|KU/U| \geq \frac{d(G)}{\nabla G}. 
\end{equation}
For every finite index subgroup $V \leq U$ we have
\begin{equation} \label{UpBSRGFFEq}
\begin{split}
d(U/U \cap K) &= d(UK/K) \leq d(UK) \leq d(G)[G : UK] \\
&= \frac{d(G)[G : U]}{[UK : U]} \leq \frac{d(G)[G : V]}{[KU : U]} \\ 
&\stackrel{\ref{DefRGEq}}{\leq} \frac{d(G)(d(V) - 1)}{\nabla G [KU : U]} \stackrel{\ref{KSizeLBoundEq}}{\leq} d(V)-1.
\end{split}
\end{equation}
Let $T$ be a generating set of $U/U \cap K$ of minimal cardinality, let $S \subseteq U$ be a set mapped bijectively to $T$ by the quotient map $q \colon U \to U/U \cap K$, and set $L \defeq \langle S \rangle.$ Clearly, $L \leq U$ and $L(U \cap K) = U$ since $T \subseteq q(L)$. Were the index of $L$ in $U$ finite, we would get a contradiction as follows:
\begin{equation}
d(L) \leq |S| = |T| = d(U/U \cap K) \stackrel{\ref{UpBSRGFFEq}}{\leq} d(L)-1.
\end{equation}
Hence, $[U : L] = \infty$. By our assumption, there exists a finite left transversal $F$ of $L \cap R$ in $L$. As $K \leq R$, it follows that
\begin{equation}
U = L(U \cap K) \subseteq L(U \cap R) = F(L \cap R)(U \cap R) \subseteq F(U \cap R)
\end{equation}
so $[U : U \cap R] < \infty$ and thus $[G : R] = \frac{[G:U][U : U \cap R]}{[R : U \cap R]} < \infty$ - an absurdity. 

\end{proof}

\section{Profinite groups} \label{ProGr}

Since our arguments are focused mainly on finite index subgroups, some of our results are more naturally stated for profinite groups. For these groups, only closed subgroups are considered, so additional separability assumptions such as RF, LPF, LERF are not required.

\begin{theorem} \label{MainProfThm}

Let $\Gamma$ be a finitely generated profinite group with positive rank gradient, and let $A,B$ be finitely generated subgroups of infinite index in $\Gamma$. Then there exists an open subgroup $B_0$ of $B$ such that $A$ and $B_0$ generate a subgroup of infinite index in $\Gamma$.

\end{theorem}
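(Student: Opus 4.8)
The plan is to observe that the entire chain leading to \thmref{OlThm} transfers to profinite $\Gamma$, where the separability hypotheses become vacuous. Since $\Gamma$ is profinite, every closed subgroup is the intersection of the open subgroups containing it, so the profinite measure \eqref{DefProfMesEq} --- now with $\varphi$ ranging over continuous epimorphisms onto finite groups --- coincides with the Haar measure on closed sets, and the argument of \propref{MesSubProp} applies verbatim to show that any closed subgroup $H$ of infinite index has $\mu(H) = 0$. In particular no LPF assumption is needed: a closed, topologically finitely generated subgroup of infinite index automatically has vanishing measure, which is exactly the property that the discrete proof extracted from LPF.

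First I would re-run \lemref{WeakOlLem}. As $A$ and $B$ are closed of infinite index, they have infinite proindex, so one may pick open $U, V \leq \Gamma$ containing $A, B$ with $[\Gamma : U], [\Gamma : V]$ satisfying \eqref{BigProIndEq}. Setting $A_0 = A \cap V$, $B_0 = B \cap U$ and $C = \overline{\langle A_0 \cup B_0 \rangle}$, the profinite Nielsen--Schreier bound $d(W) \leq 1 + [\Gamma : W](d(\Gamma) - 1)$ for open $W$ makes the estimate \eqref{BitLongEq} valid as written, forcing $[\Gamma : C] = \infty$. Feeding this into the covering argument of \corref{ProdCor} gives $\mu(AB) = 0$: here $C$ is topologically finitely generated and closed of infinite index, hence $\mu(C) = 0$ by the paragraph above, and one notes that $A \times B$ is compact with continuous multiplication, so $AB$ is closed and $\mu(AB)$ is genuinely its Haar measure.

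Finally the proof of \thmref{OlThm} itself carries over. Choosing a continuous epimorphism $\varphi \colon \Gamma \to K$ onto a finite group with $|\varphi(AB)|/|K| \leq \epsilon$ as in \eqref{MesZerABEq}, and setting $N = \mathrm{Ker}(\varphi)$, $B_0 = B \cap NA$, $C = \overline{\langle A \cup B_0 \rangle}$, I would check that $N$ is open and normal, that $NA = \varphi^{-1}(\varphi(A))$ is open, and hence that $B_0$ is an open subgroup of $B$ --- precisely the conclusion sought. The index bound \eqref{CompareIndEq} uses only \propref{ObsProp}, which is purely algebraic, and \eqref{RankUpBoundddEq} uses only Schreier's bound; together they yield $d(C) \leq [\Gamma : NA]\nabla\Gamma$, contradicting $[\Gamma : C] < \infty$ via \eqref{DefRGEq}.

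The one substantive input, beyond this formal translation, is the validity of Schreier's rank inequality for open subgroups of a topologically finitely generated profinite group, together with the fact (Nikolov--Segal) that finite-index subgroups of such $\Gamma$ are automatically open, so that $d(\cdot)$ and the rank gradient are computed over the right class of subgroups. Granting these, every invocation of LERF, LPF, or RF in the discrete proofs is simply replaced by the tautology that closed subgroups of a profinite group are intersections of open ones.
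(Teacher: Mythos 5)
Your proposal is correct and follows essentially the same route as the paper, which simply states that the proof of \thmref{MainProfThm} is identical to that of \thmref{OlThm}, with the separability hypotheses (LPF/LERF) becoming automatic because closed subgroups of a profinite group are intersections of open ones. The extra details you supply --- openness of $NA$ and hence of $B_0$ in $B$, the profinite Schreier bound, and the identification of the profinite measure with Haar measure --- are exactly the routine verifications the paper leaves implicit.
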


The proof is identical to that of \thmref{OlThm}.

\begin{theorem}

Let $\Gamma$ be a finitely generated profinite group with positive rank gradient, let $n \in \mathbb{N}$, and let $H_1, \dots, H_n$ be finitely generated subgroups of infinite index in $\Gamma$. Then 
\begin{equation}
\mu_{\Gamma}(H_1 \cdots H_n) = 0.
\end{equation}

\end{theorem}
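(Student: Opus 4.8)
The plan is to imitate the proof of \corref{NegCor} essentially verbatim, replacing the appeal to \thmref{OlThm} by its profinite counterpart \thmref{MainProfThm}. Before running the induction I would record that the whole measure-theoretic toolbox transfers to the profinite setting: the three formal properties of the profinite measure — monotonicity \eqref{MonotEq}, subadditivity \eqref{SubAddEq}, and translation invariance \eqref{TransInvEq} — as well as the index formula of \propref{MesSubProp}, all carry over to $\mu_\Gamma$ without change, since their proofs only manipulate the finite continuous quotients of $\Gamma$ and use that a closed subgroup of infinite index is contained in open subgroups of arbitrarily large finite index. In particular, for a closed subgroup $H \leq \Gamma$ one still has $\mu_\Gamma(H) = 1/[\Gamma : H]$, and this gives the base case $n = 1$: as $[\Gamma : H_1] = \infty$, we get $\mu_\Gamma(H_1) = 0$.

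For the inductive step ($n \geq 2$) I would apply \thmref{MainProfThm} with $A = H_{n-1}$ and $B = H_n$ to obtain an open subgroup $B_0 \leq H_n$ such that $C \defeq \langle H_{n-1} \cup B_0 \rangle$, the closed subgroup (topologically) generated by $H_{n-1}$ and $B_0$, has infinite index in $\Gamma$. Since $B_0$ is open in the finitely generated profinite group $H_n$, it is itself finitely generated, so $C$ is a finitely generated closed subgroup of infinite index; this keeps us within the class of subgroups to which the inductive hypothesis applies. Moreover, openness of $B_0$ in $H_n$ means $[H_n : B_0] < \infty$, so a right transversal $R$ of $B_0$ in $H_n$ is finite and $H_n = B_0 R$.

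It then remains to reproduce the chain of estimates of \corref{NegCor}. Applying the inductive hypothesis to the $n-1$ finitely generated infinite-index subgroups $H_1, \dots, H_{n-2}, C$ yields $\mu_\Gamma(H_1 \cdots H_{n-2} C) = 0$. Using $H_n = B_0 R$ together with the inclusion $H_{n-1} B_0 \subseteq C$, monotonicity gives
\[
\mu_\Gamma(H_1 \cdots H_{n-1} H_n) = \mu_\Gamma(H_1 \cdots H_{n-2} H_{n-1} B_0 R) \leq \mu_\Gamma(H_1 \cdots H_{n-2} C R),
\]
and decomposing $CR = \bigcup_{r \in R} C r$ over the finite set $R$, subadditivity and translation invariance collapse the right-hand side to $|R|\,\mu_\Gamma(H_1 \cdots H_{n-2} C) = 0$.

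I do not expect a genuine obstacle here: the only points requiring real care are confined to the first paragraph, namely verifying that the three measure properties and \propref{MesSubProp} transfer to $\mu_\Gamma$, and that ``open'' in \thmref{MainProfThm} indeed delivers a finite transversal. Once these are in hand the combinatorics is identical to the discrete case, and crucially no separability hypothesis (RF, LPF, LERF) is needed, because in a profinite group every subgroup in sight is closed by fiat.
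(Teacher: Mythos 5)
Your proposal is correct and matches the paper's approach exactly: the paper's entire proof of this theorem is the single remark that it ``follows that of \corref{NegCor}'' with \thmref{OlThm} replaced by \thmref{MainProfThm}, which is precisely the substitution you carry out. Your preliminary checks (that the measure properties and \propref{MesSubProp} transfer, that an open subgroup of a finitely generated profinite group is finitely generated with finite transversal, and that no separability hypothesis is needed) are exactly the routine verifications the paper leaves implicit.
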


Here $\mu_{\Gamma}$ stands for the Haar measure on $\Gamma$. The proof follows that of \corref{NegCor}. Some profinite groups with positive rank gradient are:

\begin{enumerate}

\item Nonabelian free profinite, free pro-$p$, and free prosolvable groups.

\item Free pro-$p$ products.

\item Groups satisfying Schreier's formula.

\item Nonsolvable Demushkin groups and surface groups.

\item Pro-$p$ groups with deficiency at least $2$.

\item Pro-$p$ groups from the class $\mathcal{L}$ all of whose abelian subgroups are procyclic.

\item Completions of groups from the list in the introduction.

\end{enumerate}

\section*{Acknowledgments}

I would like to thank Alexander Olshanskii, Yves de Cornulier, Ashot Minasyan, Benjamin Steinberg, and Henry Wilton for many helpful discussions. This research was partially supported by a grant of the Israel Science Foundation with cooperation of UGC no. 40/14.

\item Author's address: Open Space, Schreiber Building (Mathematics), Tel-Aviv University, Levanon Street, Tel-Aviv, Israel.

\item Author's email: markshus@mail.tau.ac.il

\item Author's website: \url{markshus.wix.com/math}

\end{document}